\documentclass{amsart}

\usepackage[margin=1in]{geometry}
\usepackage{latexsym}
\usepackage{float}
\usepackage{amssymb}
\usepackage[all]{xy}
\usepackage{epsfig}
\usepackage{color}
\usepackage{graphics}
\usepackage{hyperref}

\newtheorem{Thm}[equation]{Theorem}
\newtheorem{Lem}[equation]{Lemma}

\theoremstyle{remark}

     \title{On  the HOMFLY Polynomial of $4$-plat presentations of knots}

     \author{Bo-hyun Kwon}
     \address{Department of Mathematics, Oklahoma State University, 401 Mathematical Sciences Stillwater, OK 74074, USA}
     \email{bohykwon@math.okstate.edu}
     \thanks{}

     \date{June, 13, 2014}

     \keywords{HOMFLY polynomial, $2n$-plat presentation,  HOMFLY bracket polynomial}
     \subjclass{57M27}

\begin{document}

\begin{abstract}
In this paper, I give a method to calculate the HOMFLY polynomials of two bridge knots by using a representation of the braid group $\mathbb{B}_4$ into a group of $3\times 3$ matrices. Also, I will give examples of a 2-bridge knot and a 3-bridge knot that have the same Jones polynomial, but different HOMFLY polynomials.
\end{abstract}

  \maketitle

\section{Introduction}

In 1985, Hoste-Ocneanu-Millett-Freyd-Lickorish-Yetter~\cite{3} had discovered the HOMFLY polynomial which is a 2-variable oriented link polynomial $P_L(a,m)$ motivated by the Jones polynomial. Also,  Prztycki and Traczyk~\cite{7} independently had done some work related to the HOMFLY polynomial.
The calculation of the HOMFLY polynomial is based on the HOMFLY skein relations as follows.\\

\begin{enumerate}

\item $P(L)$ is an isotopy invariant.

\item $P$(unknot)=1.

\item $a \cdot P(L_+)+a^{-1}\cdot P(L_{-})+m\cdot P(L_0)=0$.
\end{enumerate}

\begin{figure}[htb]
\includegraphics[scale=.4]{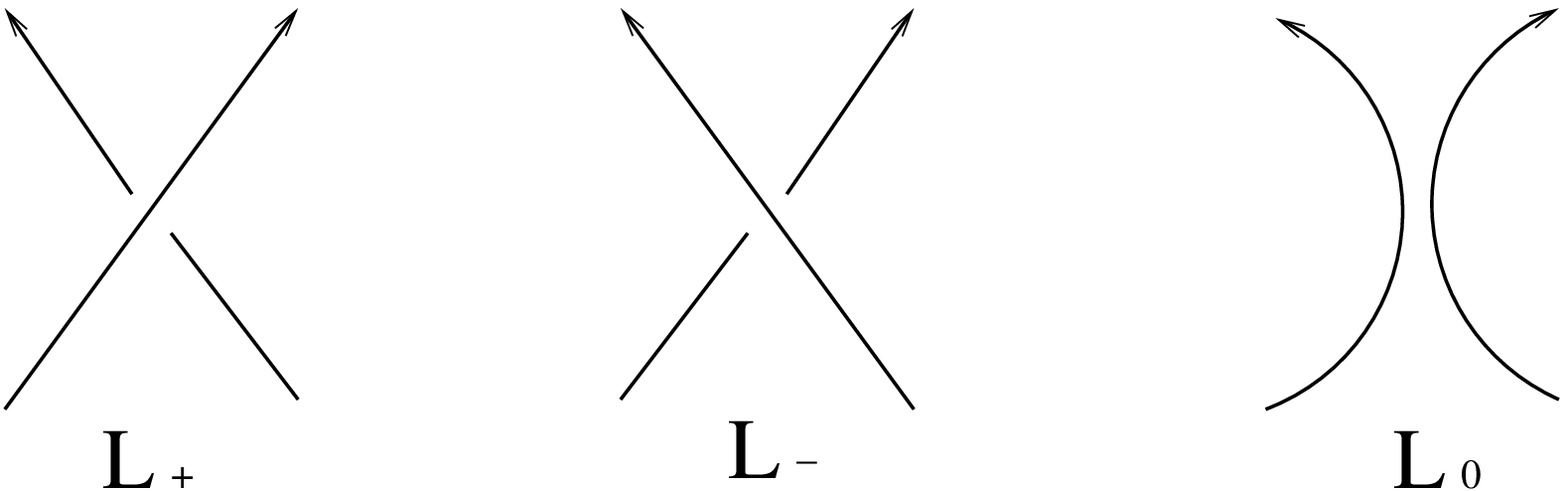}
\caption{}
\label{p1}
\end{figure}

Lickorish and  Millett~\cite{6} and Kanenobu and Sumi~\cite{8} gave a formula to calculate the HOMFLY polynomials of 2-bridge knots by using a representation of the continued fraction of a rational knot into a group of $2\times 2$ matrices. (See Proposition 14 of~\cite{6}.) \\

In this paper, I use the HOMFLY blacket polynomial and the plat presentation of knots to calculate the HOMFLY polonomials of rational knots by using a representation of the braid group $\mathbb{B}_4$ into a group of $3\times 3$ matrices. Also, we can extend the method to evaluate the HOMFLY polynomials of $2n$-plat presentations of knots.\\

Now, we define the plat presentation of a knot and a rational tangle.
Let $S^2$ be a sphere smoothly embedded in $S^3$ and let $K$ be a link transverse to $S^2$. The complement in $S^3$ of  $S^2$ consists of two open balls, $B_1$ and $B_2$. We assume that $S^2$ is $xz$-plane $\cup~\{\infty\}$.  Now, consider the projection of $K$ onto the flat $xy$-plane.
Then, the projection onto the $xy$-plane of $S^2$ is the $x$-axis and $B_1$ projects to the upper half plane and $B_2$ projects to the lower half plane.
The projection gives us a {\emph{ link diagram}}, where we make note of over and undercrossings.
The diagram of the link $K$  is called a $\emph{plat on 2n-strings}$, denoted by $p_{2n}(w)$,  if it is the union of a $2n$-braid $w$ and $2n$ unlinked and unknotted arcs which connect pairs of consecutive strings of the braid at the top and at the bottom endpoints and $S^2$ meets the top of the $2n$-braid. (See  the first and second diagrams of Figure~\ref{p2}.) Any link $K$ in $S^3$ admits a plat presentation, that is $K$ is ambient isotopic to a plat (\cite{2}, Theorem 5.1). The bridge (plat) number $b(K)$ of $K$ is the smallest possible number $n$ such that there exists a plat presentation of $K$ on $2n$ strings.\\

 We know that the braid group $\mathbb{B}_{4}$ is generated by $\sigma_1,\sigma_2,\sigma_{3}$ which are twisting of two adjacent strings. For example, $w=\sigma_2^{-2}\sigma_1^{2}\sigma_2^{-1}\sigma_3^2\sigma_2^{-1}$ is the word for the
4 braid of the first diagram of Figure~\ref{p2}.\\

\begin{figure}[htb]
\includegraphics[scale=.40]{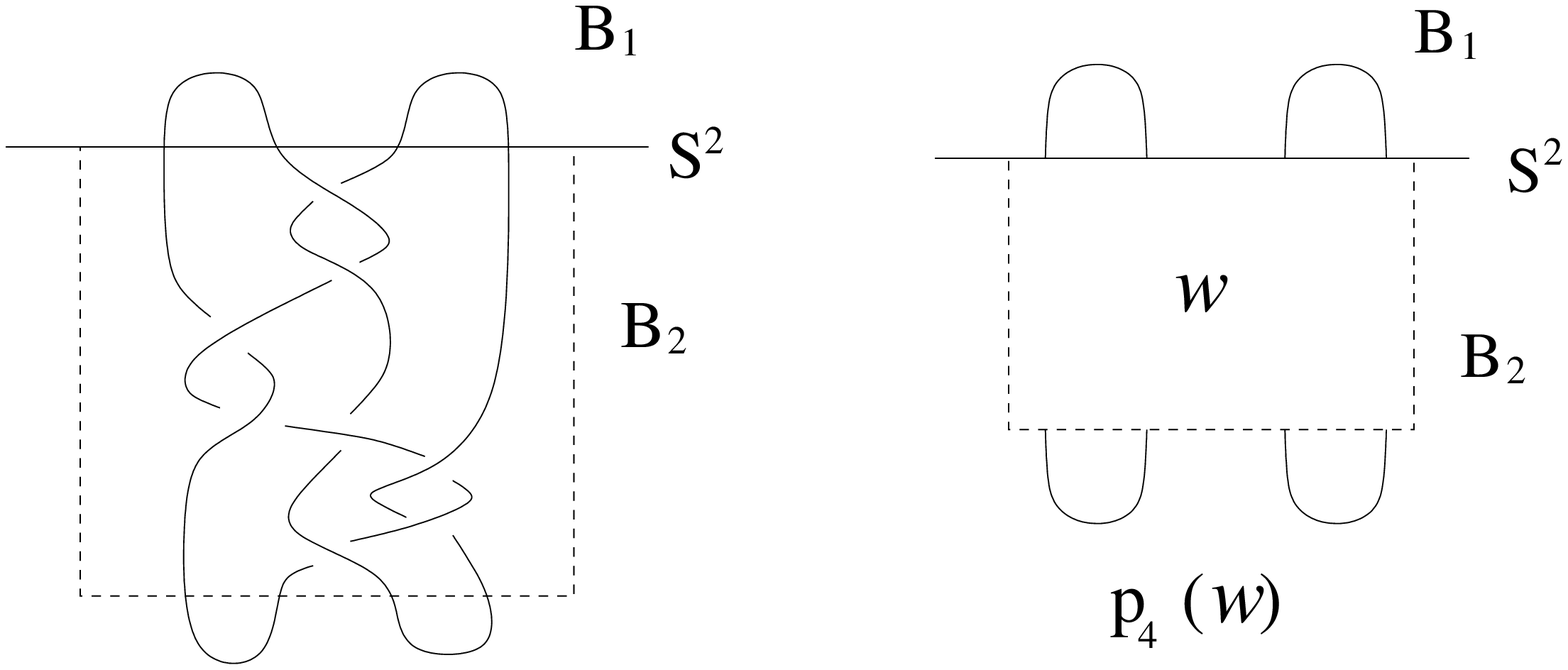}
\caption{}
\label{p2}
\end{figure}

 Then we say that a plat presentation  is $\emph{standard}$ if the $4$-braid $w$ of $p_{4}(w)$  involves only $\sigma_2,\sigma_3$.\\
 
 A $\emph{2-tangle}$ is the disjoint union of $2$ properly embedded arcs in a 3-ball $B^3$. 
 Then we say that a $2$-$tangle$ $T=(B^3,\alpha_1 \cup\alpha_2)$ is $\emph{rational}$ if there exists a homeomorphism of pairs ${H}: (B^3,\alpha_1 \cup\alpha_2)\longrightarrow
(D^2\times I,\{p_1,p_2\}\times I)$, where $p_i$ are distinct points in $D^2$ and $I=[0,1]$. We  have the two  $2$-tangles $T_1=(B_1,K\cap B_1)$ and $T_2=(B_2,K\cap B_2)$. We note that $T_1$ and $T_2$ are rational if $K$ has a plat presentation. Let $T_w$ be the rational $2$-tangle in $B_2$ if $K$ has a plat presentation.\\

Now, we define a $\emph{plat presentaion}$ for rational $2$-tangles $p_{4}(w)\cap B_2$ (Refer to~\cite{4}.) denoted by $q_{4}(w)$  if it is  the union of a $4$-braid $w$ and $2$ unlinked and unknotted arcs which connect pairs of strings of the braid  at the bottom endpoints with the same pattern as in a plat presentation  for a knot  and $\partial B_2$ meets the top of the $4$-braid.\\

We note that $q_{4}(w)$ is  a rational $2$-tangle in $B_2$. (See~\cite{5}.)\\

We say that $\overline{q_{4}(w)}$ = $p_{4}(w)$ is the $\emph{plat closure}$ of $q_{4}(w)$.\\

The tangle diagrams with the circles in Figure~\ref{p3} give the  diagrams of trivial rational $2$-tangles as in~\cite{1},~\cite{3},~\cite{4},~\cite{7}.

\begin{figure}[htb]
\includegraphics[scale=.32]{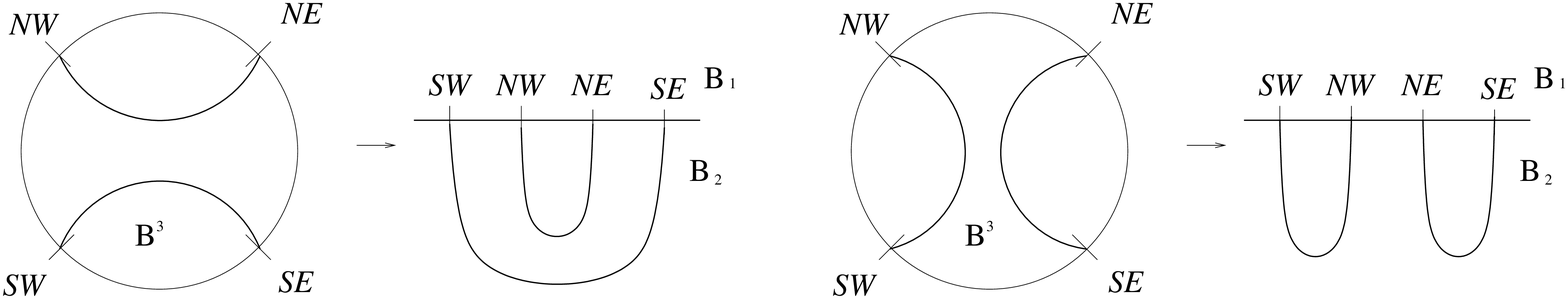}
\caption{}
\label{p3}
\end{figure}

We note that $q_4(w)$ is alternating if and only if $\overline{q_4(w)}$ is alternating.\\

A tangle $T$ is  $\emph{reduced}$ alternating if $T$ is alternating and $T$ does not have a self-crossing which can be removed by a Type I Reidemeister move.

\begin{Thm}[\cite{5}]\label{T1}
If $K$ is a 2-bridge knot, then there exists a word $w$ in $\mathbb{B}_4$ so that the  plat presentation $p_4(w)$ is reduced alternating, standard and represents a knot isotopic to $K$.
\end{Thm}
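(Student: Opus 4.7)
The plan is to invoke Schubert's classification of $2$-bridge knots by rational numbers and to convert the resulting continued-fraction data into a word in the subgroup $\langle \sigma_2, \sigma_3\rangle$ of $\mathbb{B}_4$. Every $2$-bridge knot $K$ is determined up to isotopy by a fraction $p/q$, and $p/q$ admits a continued fraction expansion $[a_1,a_2,\dots,a_n]$ in which every $a_i$ is a positive integer, obtained from the Euclidean algorithm with a mild normalization of the last entry.

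Next, I would build the braid word by reading off this continued fraction as an alternating sequence of twist blocks: set
\[
w \;=\; \sigma_2^{\varepsilon_1 a_1}\,\sigma_3^{\varepsilon_2 a_2}\,\sigma_2^{\varepsilon_3 a_3}\,\sigma_3^{\varepsilon_4 a_4}\cdots,
\]
with signs $\varepsilon_i\in\{+1,-1\}$ fixed so that consecutive twist blocks produce alternating over/under crossings along the strands through the plat closure. By construction $w$ involves only $\sigma_2$ and $\sigma_3$, so $p_4(w)$ is standard.

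Then I would check that $p_4(w)$ represents $K$: the rational $2$-tangle $q_4(w)\subset B_2$ realizes the continued fraction $[a_1,\dots,a_n]$ in the standard rational-tangle calculus, and its plat closure $\overline{q_4(w)}=p_4(w)$ is precisely the $2$-bridge knot with Schubert fraction $p/q$. The diagram is alternating by the choice of the signs $\varepsilon_i$, and reducedness follows from positivity of the $a_i$ once one rules out trailing $1$'s (for instance by arranging $a_1,a_n\ge 2$, absorbing any trailing $1$ into the adjacent entry), so that no crossing can be undone by a Type~I Reidemeister move at a junction between a twist block and a plat arc.

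The main obstacle is the final diagrammatic bookkeeping: one must fix the parity of $n$, the choice of the starting generator (either $\sigma_2$ or $\sigma_3$), and the signs $\varepsilon_i$ in a coherent way so that the braid word realizes the correct rational tangle class \emph{and} the resulting $4$-plat is genuinely alternating with no removable self-crossings. This case analysis is exactly what~\cite{5} carries out, and is the real content beyond the continued-fraction translation.
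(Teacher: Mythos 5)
Note first that the paper does not actually prove this statement: Theorem~\ref{T1} is imported wholesale from Kauffman--Lambropoulou~\cite{5}, so there is no internal proof to compare against. Your route --- Schubert fraction, all-positive continued fraction, translation into a word in $\langle\sigma_2,\sigma_3\rangle$, plat closure --- is the standard one and is essentially what the cited source provides, so in spirit you and the paper are doing the same thing (the paper by citation, you by outline). But your outline defers the ``real content'' to~\cite{5}, which is the very reference the theorem is attributed to; as a standalone proof that is circular, and the one step you do attempt to argue on your own contains an error, described next.

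The genuine gap is your reducedness criterion. You claim reducedness follows ``once one rules out trailing $1$'s (for instance by arranging $a_1,a_n\ge 2$).'' This is neither necessary nor sufficient. In a standard $4$-plat the closure arcs join endpoints $1$--$2$ and $3$--$4$, so a $\sigma_2$-crossing involves strands ending on \emph{different} closure arcs and can never be undone by a Type~I move, no matter how short its twist block is; blocks of size $1$ are harmless (the paper's own trefoil example $\sigma_2\sigma_3^{-1}\sigma_2$ is reduced with all entries equal to $1$). The actual obstruction --- spelled out in the paper immediately after the theorem --- is a word that ends (or begins) with a $\sigma_3$-block: since strands $3$ and $4$ are joined by a closure arc, that arc can be rotated to cancel every crossing in the terminal block, so a word of the form $\sigma_2^{\epsilon_1}\sigma_3^{-\epsilon_2}\cdots\sigma_3^{-\epsilon_{2n}}$ is never reduced, even if $\epsilon_{2n}\ge 2$. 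What must be arranged is therefore an \emph{odd} number of twist blocks, starting and ending with $\sigma_2$. This is a parity normalization of the continued fraction, using $[a_1,\ldots,a_n]=[a_1,\ldots,a_n-1,1]$ when $a_n\ge 2$, or absorbing a trailing $1$ when $a_n=1$; in particular, sometimes you must \emph{create} a trailing $1$ rather than eliminate one, which is the opposite of the normalization you propose.
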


By Theorem~\ref{T1}, any 2-bridge knot $K$ can be represented by a word $w$ as a  plat which involves only $\sigma_2$ and $\sigma_3^{-1}$ (or $\sigma_2^{-1}$ and $\sigma_3$). i.e., $w=\sigma_2^{\epsilon_1}\sigma_3^{-\epsilon_2}\cdot\cdot\cdot\sigma_2^{\epsilon_{2n-1}}$
for some positive (negative) integers $\epsilon_i$ for $1\leq i\leq 2n-1$. We notice that if $w=\sigma_2^{\epsilon_1}\sigma_3^{-\epsilon_2}\cdot\cdot\cdot\sigma_3^{-\epsilon_{2n}}$  for some positive (negative) integer $\epsilon_{2n}$ then it is not a reduced alternating form. i.e., we can twist the right unlinked and unknotted bottom arc to reduce some crossings to have fewer crossings for $K$.
So, in order to have a reduced alternating form, $w$ needs to start from $\sigma_2^{\pm 1}$ and end at $\sigma_2^{\pm 1}$.\\

In section 2, we introduce the HOMFLY  polynomial of rational 2-tangles and  give a formula to calculate the HOMFLY polynomial of 4-plat presentations of knots.\\

In section 3, we give a method to find  the orientation of each crossing of a knot from a given orientation of the knot.\\

Then, we give some examples of knots for which we calclulate the HOMFLY polynomials and especially give examples of a 2-bridge knot and a 3-bridge knot that have the same Jones polynomial, but different HOMFLY polynomials in section 4.

\section{HOMFLY  bracket polynomial of  rational 2-tangles and the main theorem}

Let $K$ be a 2-bridge knot. By Theorem~\ref{T1} there exists a  plat presentation $p_4(w)$ which is reduced alternating, standard and represents a knot isotopic to $K$.\\

For a given orientation of $K$, we will give an induced orientation to the rational $2$-tangle $T_w=K\cap B_2$ such that $T_w$ has the same orientation with the oriented knot $\overrightarrow{K}$ in $B_2$. Let 
$\overrightarrow{q_4(w)}$ be the plat presentation of $T_w$ with the induced orientation.\\
 
 Now, we define the HOMFLY  polynomial of an oriented plat presentation of rational $2$-tangle $\overrightarrow{q_4(w)}$ in $B_2$ as $P(\overrightarrow{T_w})=f(a,m)<T_0>+g(a,m)<T_{\infty}>+h(a,m)<T_x>$, where the coefficients $f(a,m),g(a,m)$ and $h(a,m)$ are polynomials in $a,a^{-1}$ and $m$ that are obtained by starting with $\overrightarrow{T_w}$ and using the skein relations repeatedly until only the three tangles in the expression given for $T_w$.\\
 
 \begin{figure}[htb]
\includegraphics[scale=.4]{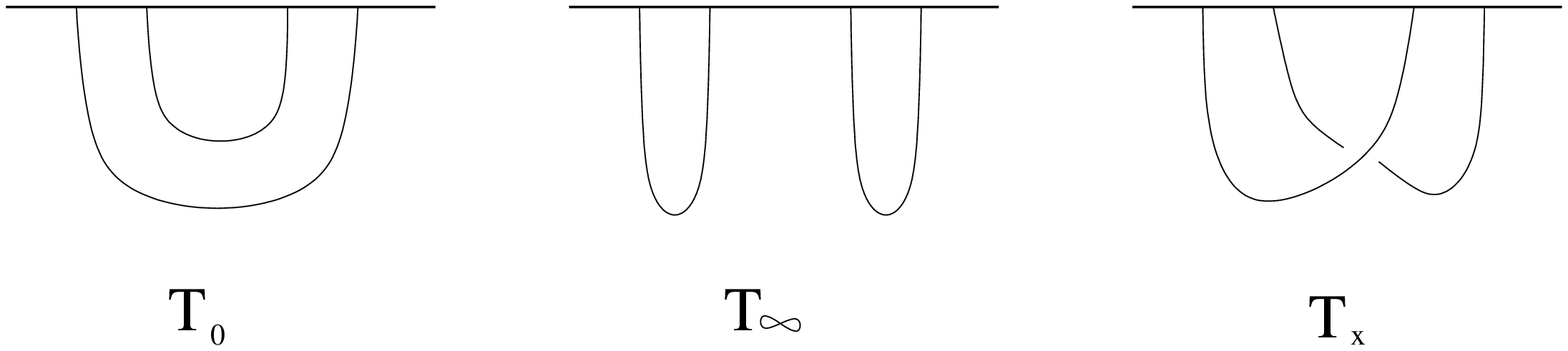}
\caption{}
\label{p4}
\end{figure}

 Let $n$ be the number of crossings of $w$.\\
 
 We note that  if we switch one of the alternative crossings of $w$ $(n>1)$ from positive (negative) to negative (positive) to have $K'$, then we  have the plat presentation $p_4(w')$ for $K'$ so that $w'$ is a reduced alternating, standard and $w'$  has lower crossings than $w$. (Refer to~\cite{5}.) So, by the skein relations, we can reduce the number of crossings of $w$.  However, if we have an oriented rational $2$-tangle with $n=1$ then we cannot reduce the number of crossing by the skein relations. This is the reason that we need $T_x$.\\
 
We remark that polynomials $f(a,m),g(a,m)$ and $h(a,m)$ are invariant under isotopy of $\overrightarrow{q_4(w)}$.\\

Also, we note that even if we apply  the skein relationship to one of the crossings of ${q_4(w)}$, the orientations of the rest of crossings will be preserved. i.e., all the rest of crossings keep the directions for the given orientation while we are applying the skein relationship to one of the crossings of $\overrightarrow{q_4(w)}$ to calculate the HOMFLY polynomial of ${q_4(w)}$.\\

Let $\mathcal{A}=<T_0>$, $\mathcal{B}=<T_{\infty}>$ and $\mathcal{C}=<T_x>$.\\

Recall that $K$ is alternating and standard.\\

Since $p_4(w)$ is standard, we consider $\mathbb{B}_3$ instead of $\mathbb{B}_4$. Then let $\sigma_1$ and $\sigma_2$ be the two generators of $\mathbb{B}_3$. I want to emphasize here that we are changing from $\sigma_2$ and $\sigma_3$ to $\sigma_1$ and $\sigma_2$.\\

Suppose that $w=\sigma_1^{\alpha_1}\sigma_2^{-\alpha_2}\cdot\cdot\cdot\sigma_1^{\alpha_{2n-1}}$ for positive  integers $\alpha_i$ ($1\leq i\leq 2n-1$).

Then, we will give an orientation to $w$ which is induced by $\overrightarrow{K}$. So, $\sigma_1$ and $\sigma_2^{-1}$ have four different cases $\sigma_{j1}$ and $\sigma^{-1}_{j2}$ for $j=1,2$ as in Figure~\ref{p5}.

\begin{figure}[htb]
\includegraphics[scale=.4]{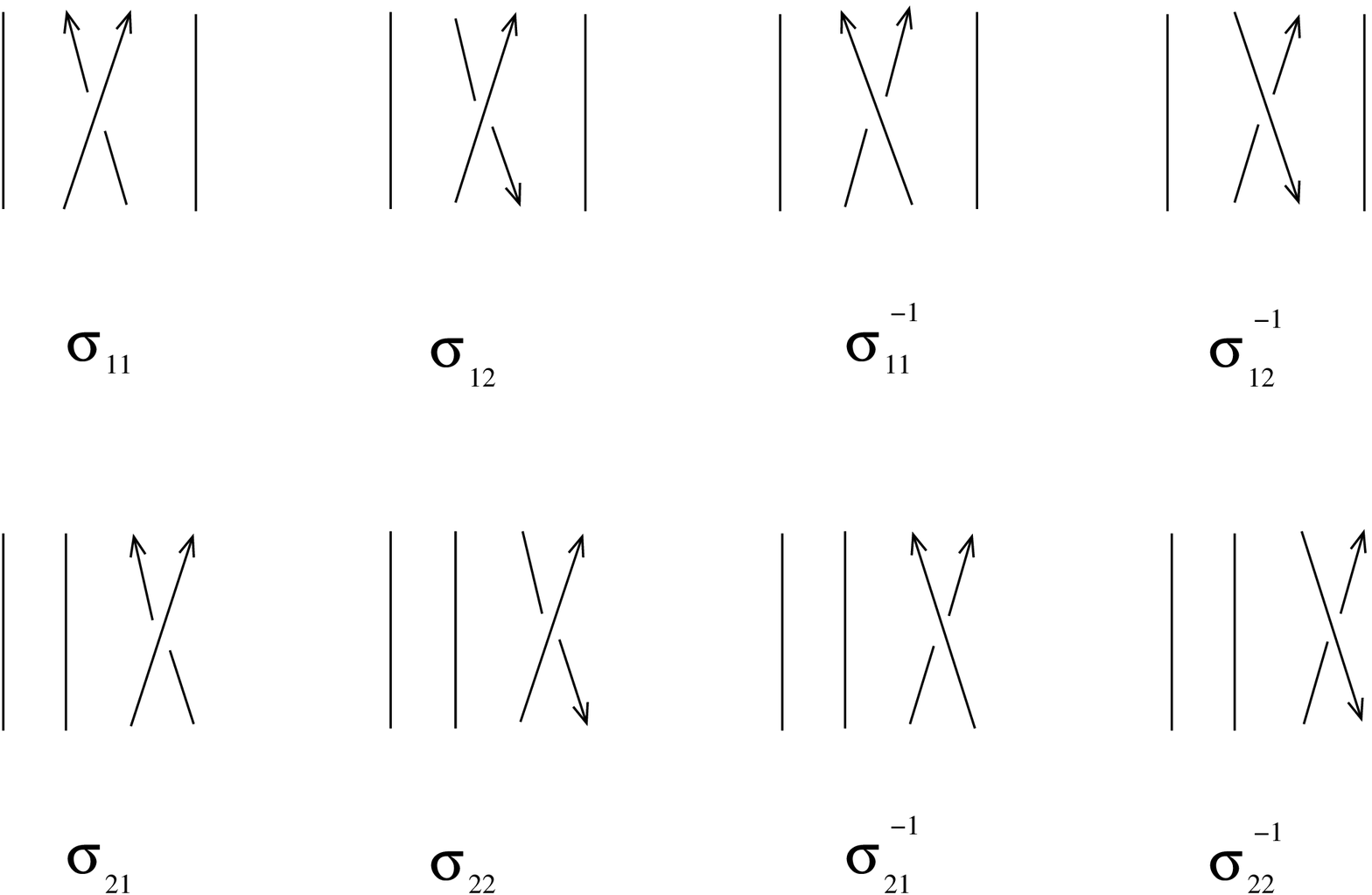}
\caption{}
\label{p5}
\end{figure}

Actually, there are two possible directions  for the orientation of $\overrightarrow{K}$. However, the skein relation does not depend on the choice of the direction.
Now, we consider the sub-directions which is induced by the orientation of $\overrightarrow{K}$ as in Figure~\ref{p5}. We note that there are the other corresponding eight cases which are obtained from the given cases  by taking the opposite arrows. However, we will not distinguish the corresponding cases since they play the same role as the corrsponding cases when we construct $3\times 3$ matrices for calculation of the HOMFLY polynomial. \\

So, by considering the orientation, we can describe $w$ as $\sigma_{1~k_{1}}^{\alpha_1}\sigma_{2~k_{2}}^{-\alpha_2}\cdot\cdot\cdot \sigma_{1~k_{2n-1}}^{\alpha_{2n-1}}$ instead of $w=\sigma_1^{\alpha_1}\sigma_2^{-\alpha_2}\cdot\cdot\cdot\sigma_1^{\alpha_{2n-1}}$, where $k_{i}\in\{1,2\}$.\\

Let $A^1_1= \left[ \begin{array}{ccc}
1 & 0 & 0 \\
0 & 0 & -a^{-2} \\
0 & 1& -a^{-1}m\\
 \end{array} \right]$, \hskip 50pt $A^1_2=\left[ \begin{array}{ccc}
1 & 0 & -am \\
0 & 0 & -a^2 \\
0 & 1 & 0\\
 \end{array} \right],\\$
 \vskip 20pt
 and,  $B^{-1}_1= \left[ \begin{array}{ccc}
0 & 0 & -a^2 \\
0 & 1 & 0 \\
1 & 0 & -am\\
 \end{array} \right]$,\hskip 50pt $B^{-1}_2= \left[ \begin{array}{ccc}
0 & 0 & -a^{-2} \\
0 & 1 & -a^{-1}m \\
1  & 0 & 0\\
 \end{array} \right].\\$
 \vskip 10pt
 Let $M=(A^1_{k_1})^{\alpha_1}(B^{-1}_{k_2})^{\alpha_2}\cdot\cdot\cdot(A^1_{k_{2n-1}})^{\alpha_{2n-1}}.$\\

\vskip 20pt
Then we have the main theorem to calculate the HOMFLY polynomial of $K$ as follows.\\

\begin{Thm}\label{T2}
Suppose that $q_4(w)$ is a  plat presentation of a rational 2-tangle $T_w$ which is alternating and standard so that
$w=\sigma_1^{\epsilon_1}\sigma_2^{-\epsilon_2}\cdot\cdot\cdot\sigma_1^{\epsilon_{2n-1}}$ for some positive  integers $\epsilon_i$ ($1\leq i\leq 2n-1$). Then
$P(T_w)=f(a,m)\mathcal{A}+g(a,m)\mathcal{B}+h(a,m)\mathcal{C}$, where $f(a,m), g(a,m)$ and $h(a,m)$ are obtained from
$[f(a,m)~g(a,m)~h(a,m)]=[0~1~0]M^t$. i.e., the second column of $M$.

Moreover, $P(K)=f(a,m)-g(a,m){(a+a^{-1})\over m}+h(a,m)$.
\end{Thm}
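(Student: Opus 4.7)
The plan is to split the proof into two parts: proving the matrix formula for the coefficients $(f,g,h)$, and then deducing the closure formula for $P(K)$.

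For the matrix formula, I would argue by induction on the braid-word length $\alpha_1+\alpha_2+\cdots+\alpha_{2n-1}$. The base case is $w$ empty, where $q_4(w)$ consists only of the four strands with the two bottom caps; this tangle is isotopic inside $B_2$ to $T_\infty$, so the initial coefficient vector is $[0\ 1\ 0]$, agreeing with the left-most factor in the formula. For the inductive step, I would show that appending a single generator $\sigma_1$ or $\sigma_2^{-1}$ with chosen orientation $k\in\{1,2\}$ to an oriented rational $2$-tangle $T$ transforms its skein decomposition by multiplication by the corresponding matrix among $A^1_k$, $B^{-1}_k$. The entries of these matrices are obtained by plugging each of $T_0$, $T_\infty$, $T_x$ into the HOMFLY skein relation $aP(L_+)+a^{-1}P(L_-)+mP(L_0)=0$ at the newly added crossing and then identifying each of the two resolutions, after a small isotopy in $B_2$, as one of the three basis tangles; reading off the resulting coefficients determines all twelve matrix entries. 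The two subscripts $k\in\{1,2\}$ record the two inequivalent local orientations that a crossing may carry in the oriented plat (see Figure~\ref{p5}); the remaining eight orientation cases reduce to these by reversing all arrows, which leaves the skein computation invariant. Composing in the order dictated by $w$ then produces the full matrix $M$, and transposing yields the row-vector formula $[f\ g\ h]=[0\ 1\ 0]M^t$.

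For the closure formula, I would compute $P$ of the plat closure of each basis tangle: both $\overline{T_0}$ and $\overline{T_x}$ are isotopic to the unknot (so $P=1$), while $\overline{T_\infty}$ is the two-component unlink (so $P=-(a+a^{-1})/m$). Since the decomposition $P(T_w)=f\mathcal{A}+g\mathcal{B}+h\mathcal{C}$ is by construction linear in the tangle, taking closures gives $P(K)=f\cdot 1+g\cdot\bigl(-(a+a^{-1})/m\bigr)+h\cdot 1$, which is the stated formula.

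The main obstacle will be the verification of the four matrices $A^1_1$, $A^1_2$, $B^{-1}_1$, $B^{-1}_2$: each column requires applying the skein relation to a specific oriented crossing attached to one of the model tangles and then performing an isotopy in $B_2$ to bring the two resolutions back into the basis. This is routine but tedious, and the orientation bookkeeping across the four cases is the principal risk of error. One also tacitly needs that every intermediate tangle produced along the way really is a linear combination of $T_0$, $T_\infty$ and $T_x$; this is what lets us work throughout with a three-dimensional coefficient space.
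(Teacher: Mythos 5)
Your proposal is correct and follows essentially the same route as the paper: induction on word length with the trivial plat tangle ($T_\infty$, coefficient vector $[0\ 1\ 0]$) as base case, skein-derived $3\times 3$ transition matrices for each oriented generator, and evaluation of the three basis closures ($1$, $-(a+a^{-1})/m$, $1$) to get $P(K)$. The only cosmetic difference is direction: the paper derives the matrices $A^{-1}_k$, $B^{1}_k$ by \emph{cancelling} the leading crossing and then invokes invertibility to use $A^{1}_k$, $B^{-1}_k$, whereas you build $M$ by appending crossings directly.
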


\begin{proof}
By Theorem~\ref{T1}, for a two bridge knot $K$, there exists a word $w$ in $\mathbb{B}_4$ so that the plat presentation $p_4(w)$ is alternating, standard and represents a link isotopic to $K$.\\

Then, by the argument above, we have $P(T_w)=f(a,m)\mathcal{A}+g(a,m)\mathcal{B}+h(a,m)\mathcal{C}$ for some 2-variable polynomials $f(a,m), g(a,m)$ and $h(a,m)$.\\

Since $w$ is standard, we consider two generators $\sigma_1$ and $\sigma_2$ for $\mathbb{B}_3$ as mentioned before.\\

Let $T_1'$ and $T_1''$ be the rational two tangles which are obtained from $T_w$ by adding $\sigma_1^{\mp 1}$ or $\sigma_2^{\pm 1}$ respectively to cancel the first $\sigma_i^{\pm 1}$ in $w$. So, we have a new word $v$ of smaller length than $w$ so that $w=\sigma_i^{\pm }v$ for the rational 2-tangles $T_1'$ and $T_1''$. Without loss of generality, we will consider the cases that $w=\sigma_1^{-1}v$ or $w=\sigma_2v$.\\

First, we consider the case that $w=\sigma_1^{-1}v$.
Then, we have that $P(T_1')=f'(a,m)\mathcal{A}+g'(a,m)\mathcal{B}+h'(a,m)\mathcal{C}$ for some 2-variable polynomials $f'(a,m),g'(a,m)$ and $h'(a,m)$.\\

Also, by Figure~\ref{p6}, we know that $P(T_1')=f(a,m)\mathcal{A}+g(a,m)<T_{x'}>+h(a,m)\mathcal{B}$, where $T_{x'}$ is the tangle which has a plat presentation $q_4(w)$ with $w=\sigma_1^{-1}$.\\

\begin{figure}[htb]
\includegraphics[scale=.35]{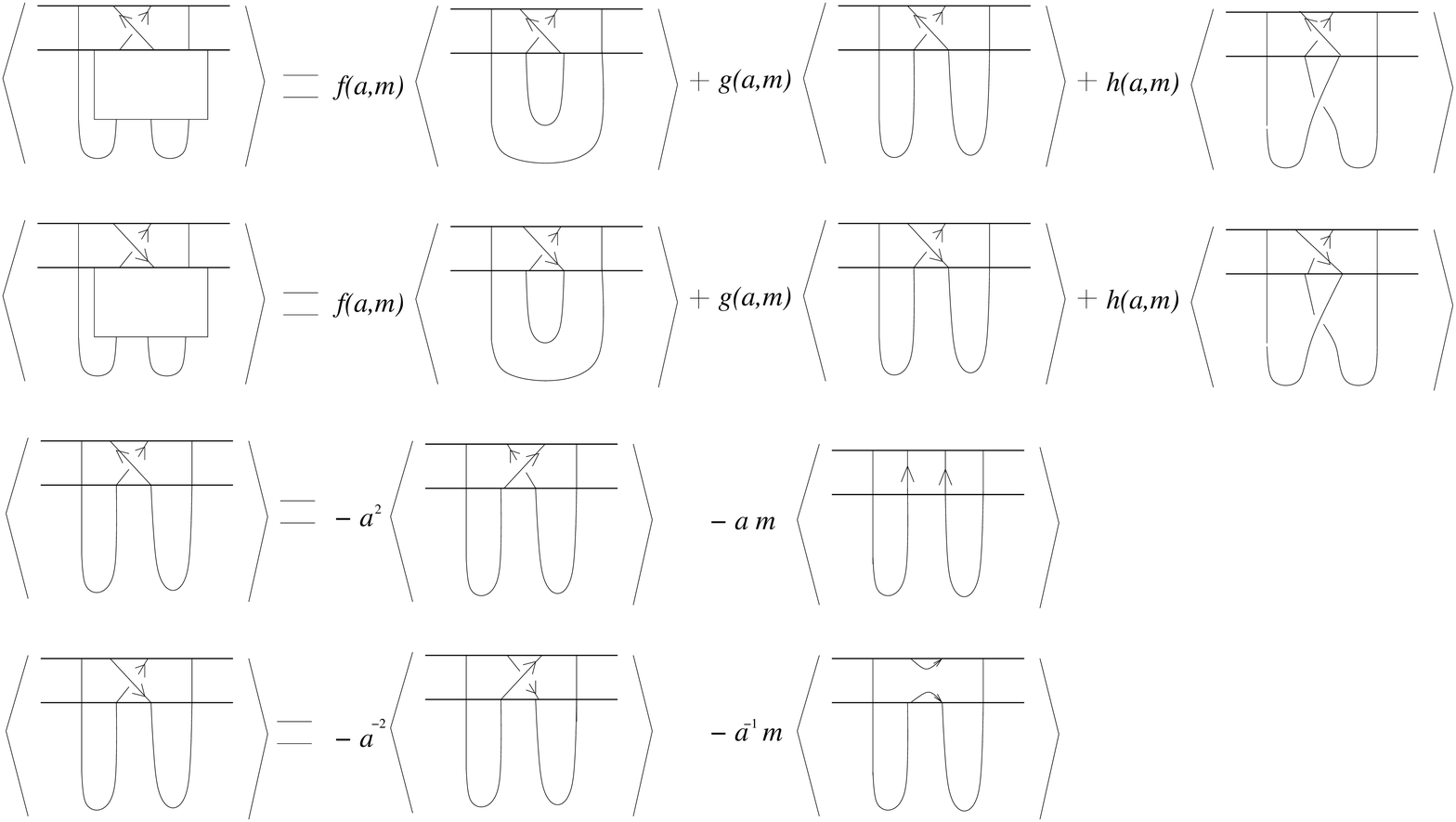}
\caption{}
\label{p6}
\end{figure}

By Figure~\ref{p6}, we also know that $P(T_{x'})=-a^2\mathcal{C}-am\mathcal{B}$ or $P(T_{x'})=-a^{-2}\mathcal{C}-a^{-1}m\mathcal{A}$.\\

Therefore, $P(T_1')=f(a,m)\mathcal{A}+g(a,m)<T_{x'}> +h(a,m)\mathcal{B}=f(a,m)\mathcal{A}+g(a,m)(-a^2\mathcal{C}-am\mathcal{B} ) +h(a,m)\mathcal{B}=f(a,m)\mathcal{A}+(h(a,m)-amg(a,m))\mathcal{B}-a^2g(a,m)\mathcal{C}$ or,\\

$P(T_1')=f(a,m)\mathcal{A}+g(a,m)<T_{x'}> +h(a,m)\mathcal{B}=f(a,m)\mathcal{A}+g(a,m)(-a^{-2}\mathcal{C}-a^{-1}m\mathcal{A} ) +h(a,m)\mathcal{B}=(f(a,m)-a^{-1}mg(a,m))\mathcal{A}+h(a,m)\mathcal{B}-a^{-2}g(a,m)\mathcal{C}$\\

Therefore, the following gives the operations.

$ \left[ \begin{array}{ccc}
1 & 0 & 0 \\
0 & -am & 1 \\
0 & -a^{2} & 0\\
 \end{array} \right]\left[\begin{array}{c}
f(a,m)\\
g(a,m)\\
h(a,m)\\
\end{array} 
  \right]=\left[\begin{array}{c}
f'(a,m)\\
g'(a,m)\\
h'(a,m)\\
\end{array} 
  \right],
   \left[ \begin{array}{ccc}
1 & -a^{-1}m & 0 \\
0 & 0 & 1 \\
0 & -a^{-2} & 0\\
 \end{array} \right]\left[\begin{array}{c}
f(a,m)\\
g(a,m)\\
h(a,m)\\
\end{array} 
  \right]=\left[\begin{array}{c}
f'(a,m)\\
g'(a,m)\\
h'(a,m)\\
\end{array} 
  \right].$
  
  \vskip 20pt
Now, we consider the case that $w=\sigma_2v$.\\
 
 Then we have $P(T_1'')=f''(a,m)\mathcal{A}+g''(a,m)\mathcal{B}+h''(a,m)\mathcal{C}$.\\

 By Figure~\ref{p7}, we also know that  $P(T_{x'})=-a^{-2}\mathcal{C}-a^{-1}m\mathcal{A}$ or $P(T_{x'})=-a^2\mathcal{C}-am\mathcal{B}$.\\

Therefore, $P(T_1')=f(a,m)<T_{x'}>+g(a,m)\mathcal{B}+h(a,m)\mathcal{C}=f(a,m)(-a^{-2}\mathcal{C}-a^{-1}m\mathcal{A} )+g(a,m)\mathcal{B} +h(a,m)\mathcal{A}=(h(a,m)-a^{-1}mf(a,m))\mathcal{A}+g(a,m)\mathcal{B}-a^{-2}f(a,m)\mathcal{C}$ or,\\

 $P(T_1')=f(a,m)<T_{x'}>+g(a,m)\mathcal{B}+h(a,m)\mathcal{C}=f(a,m)(-a^{2}\mathcal{C}-am\mathcal{B} )+g(a,m)\mathcal{B} +h(a,m)\mathcal{A}=h(a,m)\mathcal{A}+(g(a,m)-amf(a,m))\mathcal{B}-a^2f(a,m)\mathcal{C}$\\
 
  \begin{figure}
 \includegraphics[scale=.35]{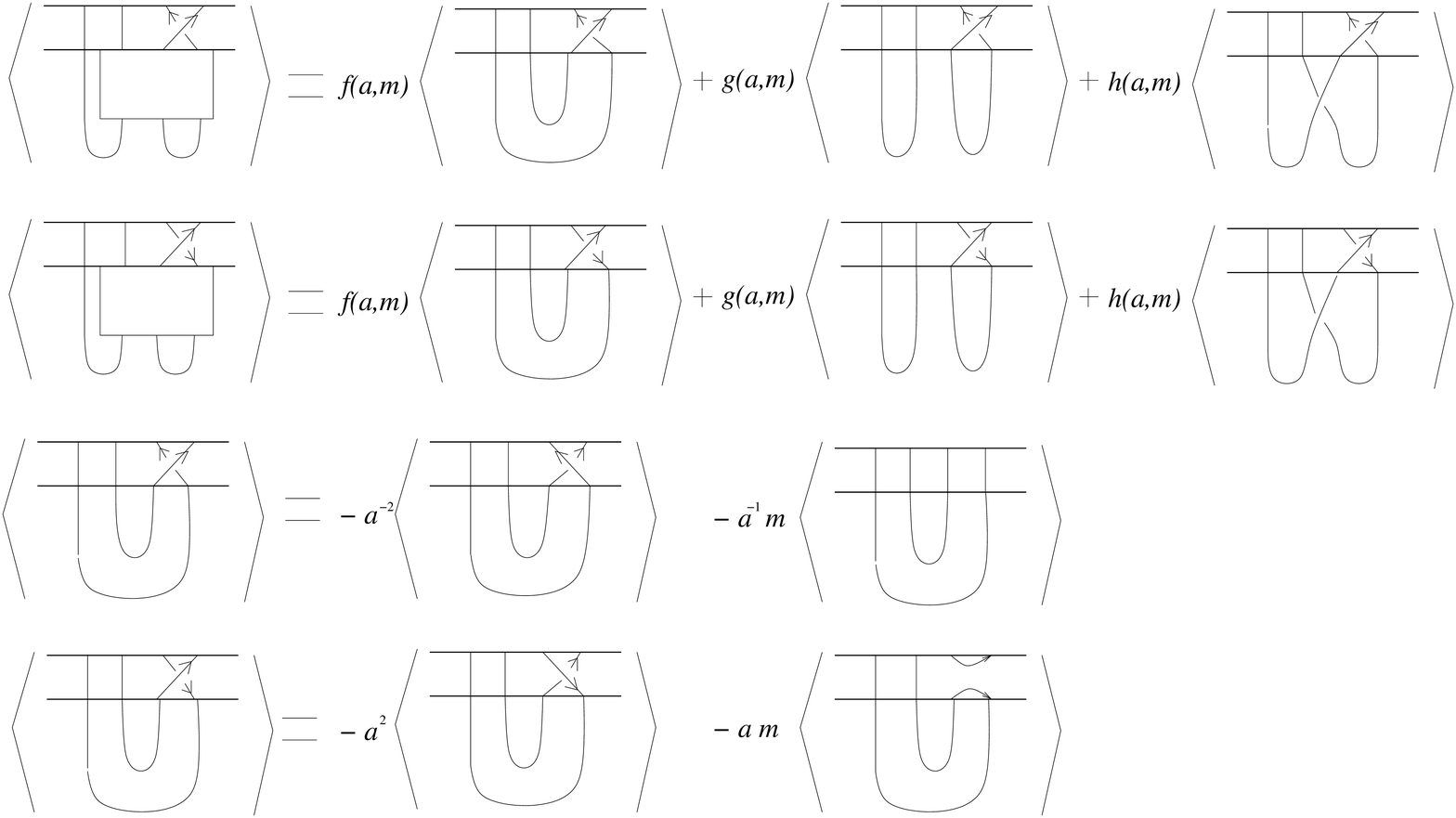}
 \caption{}
 \label{p7}
 \end{figure}
 
 Therefore, the following gives the operations.\\
  
 $ \left[ \begin{array}{ccc}
-a^{-1}m & 0 & 1 \\
0 & 1 & 0 \\
-a^{-2} & 0 & 0\\
 \end{array} \right]\left[\begin{array}{c}
f(a,m)\\
g(a,m)\\
h(a,m)\\
\end{array} 
  \right]=\left[\begin{array}{c}
f''(a,m)\\
g''(a,m)\\
h''(a,m)\\
\end{array} 
  \right],
   \left[ \begin{array}{ccc}
0 & 0 & 1 \\
-am & 1 & 0 \\
-a^{2} & 0 & 0\\
 \end{array} \right]\left[\begin{array}{c}
f(a,m)\\
g(a,m)\\
h(a,m)\\
\end{array} 
  \right]=\left[\begin{array}{c}
f''(a,m)\\
g''(a,m)\\
h''(a,m)\\
\end{array} 
  \right].$\\
 
\vskip 15pt
Now,  let $A^{-1}_1= \left[ \begin{array}{ccc}
1 & 0 & 0 \\
0 & -am & 1 \\
0 & -a^2 & 0\\
 \end{array} \right],$ \hskip 50pt $A^{-1}_2= \left[ \begin{array}{ccc}
1 & -a^{-1}m & 0 \\
0 & 0 & 1 \\
0 & -a^{-2} & 0\\
 \end{array} \right],\\$
 \vskip 20pt
 and,  $B^{1}_1= \left[ \begin{array}{ccc}
-a^{-1}m & 0 & 1 \\
0 & 1 & 0 \\
-a^{-2} & 0 & 0\\
 \end{array} \right]$,\hskip 50pt $B^{1}_2=\left[ \begin{array}{ccc}
0 & 0 & 1 \\
-am & 1 & 0 \\
-a^{2}  & 0 & 0\\
 \end{array} \right].\\$
\vskip 20pt

Now, recall that  $A^1_1= \left[ \begin{array}{ccc}
1 & 0 & 0 \\
0 & 0 & -a^{-2} \\
0 & 1& -a^{-1}m\\
 \end{array} \right],$ \hskip 50pt $A^1_2= \left[ \begin{array}{ccc}
1 & 0 & -am \\
0 & 0 & -a^2 \\
0 & 1 & 0\\
 \end{array} \right],\\$
 \vskip 20pt
 and,  $B^{-1}_1= \left[ \begin{array}{ccc}
0 & 0 & -a^2 \\
0 & 1 & 0 \\
1 & 0 & -am\\
 \end{array} \right]$,\hskip 50pt $B^{-1}_2= \left[ \begin{array}{ccc}
0 & 0 & -a^{-2} \\
0 & 1 & -a^{-1}m \\
1  & 0 & 0\\
 \end{array} \right].\\$
\vskip 20pt
We note that  each $A^{\pm 1}_i$ is invertible and $A_i^1$ is actually the inverse of $A_i^{-1}$.\\

Also, we note that each $B^{\pm 1}_i$ is invertible and $B_i^1$ is actually the inverse of $B_i^{-1}$.\\

Therefore, $A^{-1}\left[\begin{array}{c}
f(a,m)\\
g(a,m)\\
h(a,m)\\
\end{array} 
  \right]=\left[\begin{array}{c}
0\\
1\\
0\\
\end{array} 
  \right]$ since $P(T_{0})=0\cdot\mathcal{A}+1\cdot\mathcal{B}+0\cdot\mathcal{C}.$\\
  
 This implies that  $(f(a,m)~g(a,m)~h(a,m))=(0~1~0)A^t$.\\
 
 We remark that $\sigma_1$ and $\sigma_2^{-1}$ are corresponding to $\{A_1^1, A_2^1\}$ and $\{B_1^{-1},B_2^{-1}\}$ depending on the given orientation of $w$.\\

 Now, by attaching the three unlinked and unknotted arcs in $B_1$, we can calculate $P(K)=f(a,m)\cdot 1+g(a,m)\cdot({-a-a^{-1}\over m})+h(a,m)\cdot 1$ by Figure~\ref{p8}.\\
 
To see this, we need the fact that The HOMFLY polynomial of a link $L$ that is a split union of two links $L_1$ and $L_2$ is
 given by $P(L)={-a-a^{-1}\over m}P(L_1)P(L_2)$.\\
 
  So, we have $P(S^1 \dot{\cup} S^1)=-({a+a^{-1}\over m})$ for the disjoint union of two unknots.\\
 
Finally, we have $P(K)=f(a,m)-g(a,m)({a+a^{-1}\over m})+h(a,m).$
\\ 
 \begin{figure}[htb]
 \includegraphics[scale=.43]{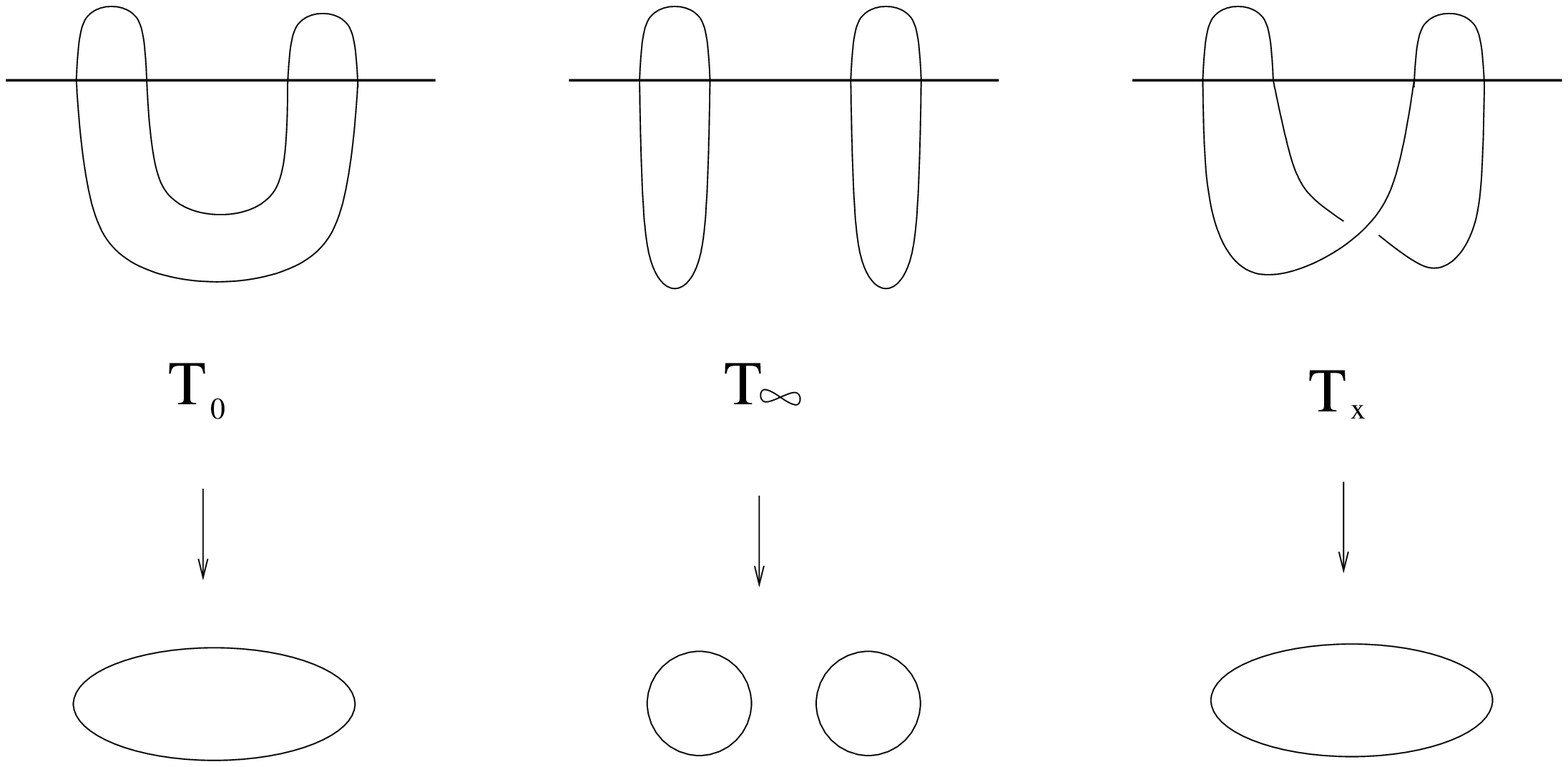}
 \caption{}
 \label{p8}
 \end{figure}
 
\end{proof}

We note that if we have a word $w$ which involves only $\sigma_1^{-1}$ and $\sigma_2$ then we can get a $3\times 3$ matrix $A$
 which is a composing sequences of $\{A_1^{-1},A_2^{-1}\}$ and $\{B_1^1,B_2^1\}$ which are corresponding to $\sigma_1^{-1}$ and $\sigma_2$ depending on a given orientation of $w$.\\
 
 We have the equality about the mirror image of $K$ as follows.\\
 
 $P_K(a,m)=P_{Mirror~Image(K)}(a^{-1},m)$.

\section{A way to determine the $k_i$ for the orientation of a rational three tangle $T$ which has a knot $\overline{T}$}

First, assume that the projection onto the $xy$-plane of a 2-bridge knot $K$ has a standard  plat presentaion $p_4(w)$ with $w=\sigma_1^{\epsilon_1}\sigma_2^{-\epsilon_2}\cdot\cdot\cdot\sigma_1^{\epsilon_{2n-1}}$ for some positive (negative) integers $\epsilon_i$ ($1\leq i\leq 2n-1$).\\

Then we have the  plat presentation $q_4(w)$ of the tangle $T=K\cap B_2$ so that $\overline{q_4(w)}=p_4(w)$.\\ 

Let  $\mathcal{P}(\sigma_i^{\pm 1})$ be the $3\times 3$ matrix which is obtained by interchanging the $i$ and $i+1$ rows of $I$.\\
Then $\mathcal{P}$ extends to a homomorphism from $\mathbb{B}_3$ to $GL_3(\mathbb{Z})$.\\

For an element $w$ of $\mathbb{B}_3$, let 1,2,3 be the upper endpoints of the three strings for $\mathbb{B}_3$  from the left. Also, let $0$ be the upper endpoint of the left most string for $\mathbb{B}_4$. Let $u=[1,2,3]$. Then we assign the same number to the other endpoint of the three strings.  Then we say that the new ordered sequence of numbers $w(u)$ is the $\emph{permutation  induced by $w$}$.

\begin{Lem}\label{T3}
Suppose that $w$ is an element of $\mathbb{B}_3$ so that $w=\sigma_1^{\epsilon_1}\sigma_2^{-\epsilon_2}\cdot\cdot\cdot\sigma_1^{\epsilon_{2n-1}}$ for some positive (negative) integers $\epsilon_i$ ($1\leq i\leq 2n-1$).\\

Then $[1,2,3]\mathcal{P}(w)$ is the permutation which is induced by $w$.
\end{Lem}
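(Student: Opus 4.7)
The plan is to verify by induction on the number of Artin generators in $w$ that right-multiplication by $\mathcal{P}(w)$ realizes, on the row vector $[1,2,3]$, exactly the tracking of the three strings from the top endpoints to the bottom endpoints. The two ingredients are that $\mathcal{P}$ is a well-defined homomorphism, and that right-multiplication by $\mathcal{P}(\sigma_i^{\pm 1})$ swaps the entries in positions $i$ and $i+1$ of a row vector.

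First I would check that $\mathcal{P}$ extends to a genuine homomorphism $\mathbb{B}_3 \to GL_3(\mathbb{Z})$. Each $\mathcal{P}(\sigma_i)$ is a transposition matrix, hence an involution, so setting $\mathcal{P}(\sigma_i^{-1}) = \mathcal{P}(\sigma_i)$ is forced and well-defined. The only relation to verify is the braid relation $\sigma_1 \sigma_2 \sigma_1 = \sigma_2 \sigma_1 \sigma_2$, and a short direct computation shows both triple products equal the permutation matrix exchanging rows $1$ and $3$. Conceptually $\mathcal{P}$ factors as $\mathbb{B}_3 \twoheadrightarrow S_3 \hookrightarrow GL_3(\mathbb{Z})$, where the second map sends a permutation to its standard permutation matrix.

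Next I would run the induction on the word length. In the base case of a single generator $\sigma_i^{\pm 1}$, direct computation gives $[1,2,3]\mathcal{P}(\sigma_1^{\pm 1}) = [2,1,3]$ and $[1,2,3]\mathcal{P}(\sigma_2^{\pm 1}) = [1,3,2]$, which match the permutations induced by a single crossing in position $1$ or $2$ (the sign of the crossing is irrelevant for string tracking). For the inductive step, I would write $w = w'\sigma_i^{\pm 1}$ with $w'$ shorter. Geometrically, the permutation induced by $w$ is obtained from the one induced by $w'$ by swapping the bottom labels at positions $i$ and $i+1$; algebraically, the homomorphism property gives $[1,2,3]\mathcal{P}(w) = \bigl([1,2,3]\mathcal{P}(w')\bigr)\mathcal{P}(\sigma_i^{\pm 1})$, and the base-case observation shows this right-multiplication swaps the $i$-th and $(i+1)$-st entries of the inductive-hypothesis vector. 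These two descriptions agree, closing the induction.

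The main point to pin down, and thus the primary obstacle, is a bookkeeping one: ensuring that the order of matrix multiplication matches the geometric composition of crossings read from top to bottom, so that extending from $w'$ to $w'\sigma_i^{\pm 1}$ really does correspond to right-multiplication by $\mathcal{P}(\sigma_i^{\pm 1})$ rather than left-multiplication. Once this convention is fixed, the lemma follows by combining the involutive swap action of each $\mathcal{P}(\sigma_i)$ with the homomorphism property, and no computation beyond the one-time verification of the braid relation is needed.
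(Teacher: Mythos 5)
Your proof is correct and takes essentially the same approach as the paper: the paper's entire proof is the single sentence that the lemma follows by induction on $m=|\epsilon_1|+|\epsilon_2|+\cdots$, the total number of crossings, which is exactly your induction on word length in the Artin generators. Your version simply supplies the details the paper leaves implicit (well-definedness of $\mathcal{P}$ via the braid relation, the base-case computation, and the fact that right-multiplication by a transposition matrix swaps the corresponding entries of a row vector).
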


\begin{proof}
This is proven by induction on $m=|\epsilon_1|+|\epsilon_2|+\cdot\cdot\cdot+|\epsilon_{2n}|$.

 \end{proof}

 Now, let $[p_w(1),p_w(2),p_w(3)]=[1,2,3]\mathcal{P}(w)$.\\

Without loss of generality, give the orientation (clockwise) to the trivial arc $\delta_1$ in $B_1$ with $\partial\delta_1=\{0,1\}$ from $1$ to $0$ along $\delta_1$. So,  the initial point of $\delta_1$ is 1 and the terminal point of $\delta_1$ is 0 for the given orientation. Then, we can give the orientation to the other trivial arcs $\delta_2$ in $B_1$ as follows, where $\partial \delta_2=\{2,3\}$.\\

Now, we consider $p_w^{-1}(3)$. Then we note that $p_w^{-1}(3)\neq 1$. If not, then $K$ is a link, not a knot.

 \begin{Lem}\label{T4}
 If $p_w^{-1}(3)=3$  then the trivial arc $\delta_2$  has the same direction (clockwise) as $\delta_1$ for the orientation. If $p_w^{-1}(3)=2$   then the trivial arc $\delta_2$  has the opposite direction (counter clockwise) as $\delta_1$ for the orientation.
 \end{Lem}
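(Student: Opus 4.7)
The plan is to trace the knot $K$ along its orientation starting from the initial point $1$ of $\delta_1$, use the permutation $p_w$ of Lemma~\ref{T3} to identify the endpoint of $\delta_2$ at which the trace first arrives, and read off the orientation of $\delta_2$ from that entry point.

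Concretely, I will follow the orientation forward from top endpoint $1$: through $\delta_1$ to top endpoint $0$, down the leftmost (straight) strand of $\mathbb{B}_4$ (which is straight because $w$ is standard) to bottom $0$, across the bottom trivial arc connecting $\{0,1\}$ to bottom $1$, and up the strand of $w$ whose bottom endpoint is at position $1$. By the definition of $p_w$, this strand emerges at top endpoint $p_w(1)$, which must be one of the two endpoints of $\delta_2$. The induced orientation of $\delta_2$ is then from $p_w(1)$ to the other endpoint of $\{2,3\}$.

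Next I will determine $p_w(1)$ from the stated value of $p_w^{-1}(3)$ using the hypothesis that $\overline{T}=K$ is a knot. Since $p_w \in S_3$ and the one-component condition rules out permutations that would split the trace into several loops, a short finite case check suffices: in the case $p_w^{-1}(3)=3$ (so $p_w(3)=3$), the knot condition forces $p_w$ to be the transposition $(1\,2)$; in the case $p_w^{-1}(3)=2$ (so $p_w(2)=3$), it forces $p_w$ to be a $3$-cycle sending $2\mapsto 3$. In each case $p_w(1)$, and hence the initial endpoint of $\delta_2$, is pinned down, yielding the asserted ``same'' or ``opposite'' direction of $\delta_2$ relative to $\delta_1$.

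The step I expect to be the main obstacle is not the permutation combinatorics (a finite check in $S_3$) but the careful matching of planar conventions: verifying that the induced orientation of $\delta_2$ obtained by the above trace corresponds to ``clockwise'' or ``counter-clockwise'' in the sense of the preceding discussion. I would resolve this by running an explicit small example in each case---for instance, the trefoil $w=\sigma_1^3$ for $p_w^{-1}(3)=3$ and a $3$-cycle example such as $w=\sigma_1^2\sigma_2^{-1}\sigma_1$ for $p_w^{-1}(3)=2$---tracing the orientation through the plat diagram and comparing with the figures of the paper to confirm that the direction labels line up.
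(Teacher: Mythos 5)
Your tracing setup is the right idea, and it is in fact more careful than the paper's own proof, which consists of the bare assertion that the orientation is upward at the endpoint $p_w^{-1}(3)$. But your final step --- ``in each case $p_w(1)$ \dots is pinned down, yielding the asserted direction'' --- is exactly where the argument breaks, and you never carry it out. Do so: by your (correct) trace, $\delta_2$ is oriented from $p_w(1)$ to the other endpoint, so everything hinges on $p_w(1)$. If $p_w^{-1}(3)=3$, the knot condition forces $p_w=(1\,2)$, hence $p_w(1)=2$; if $p_w^{-1}(3)=2$, it forces the $3$-cycle with $p_w(1)=2$, $p_w(2)=3$, $p_w(3)=1$, hence again $p_w(1)=2$. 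Thus in \emph{both} cases $\delta_2$ is oriented from $2$ to $3$. Two hypotheses producing the identical orientation of $\delta_2$ cannot yield ``same direction as $\delta_1$'' in one case and ``opposite direction'' in the other, no matter how the clockwise/counter-clockwise conventions are matched; so the proposal cannot be completed into a proof, and the planned example-checking at the end would have exposed a contradiction rather than settled a convention.

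What this reveals is that, under the conventions fixed by Lemma~\ref{T3} (where $[1,2,3]\mathcal{P}(w)$ lists the top labels of the strings as they arrive at the bottom positions, so the string coming up at bottom position $1$ ends at top endpoint $p_w(1)$), the statement itself is false as written. The correct dichotomy is: $\delta_2$ has the same direction as $\delta_1$ if and only if $p_w(1)=3$, i.e.\ $p_w^{-1}(3)=1$ --- precisely the case the paper declares impossible; the case that actually forces a link is $p_w(1)=1$, not $p_w^{-1}(3)=1$. Concretely, $w=\sigma_1^3$ (the trefoil plat) has $p_w=(1\,2)$, so $p_w^{-1}(3)=3$, yet tracing gives $\delta_2$ oriented from $2$ to $3$, opposite to $\delta_1$, contradicting the first clause of the lemma; and the paper's own trefoil word $w=\sigma_1\sigma_2^{-1}\sigma_1$ has permutation $(1\,3)$, hence $p_w^{-1}(3)=1$, the ``impossible'' case. (Your own second test word $w=\sigma_1^2\sigma_2^{-1}\sigma_1$ also has $p_w^{-1}(3)=1$, not $2$, so it would not test the case you intend.) The honest outcome of your approach is a corrected restatement of the lemma in terms of $p_w(1)$, not a proof of the statement as given.
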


\begin{proof}
If $p_w^{-1}(3)=3$  then $p_w(3)=3$. So, the direction of the orientation at $3$ is upward. So, the $\delta_2$  has the same direction with $\delta_1$.\\

If $p_w^{-1}(3)=2$  then $p_w(2)=3$.  then the direction of the orientation at $2$ is upward. So, the $\delta_2$  has the opposite direction with $\delta_1$.
\end{proof}

Recall the ordered sequence of numbers $u=[1,2,3]$. Now, we will define a new sequence of numbers $r=[r(1),r(2),r(3)]$.
For the given orientation, we replace the original number for the initial point of $\delta_2$  by 1 as follow.\\

 $r=[r(1),r(2),r(3)]$ so that $r(1)=1$
and for $i>1$,
$r(i)=1$ if $p_w^{-1}(3)=i$ and $r(i)=i$ if $p_w^{-1}(3)\neq i$.\\

For the three strings of the braids $w$, we assign the number $r(k)$ to each string with the upper endpoint $k$ for $1\leq k\leq 3$.\\
 
Now, let  $r_0=r$.\\

 Let $r_i=[r_i(1),r_i(2),r_i(3)]=
 r\mathcal{P}(\sigma_{1}^{\epsilon_1}\sigma_{2}^{-\epsilon_2}\sigma_1^{\epsilon_3}\cdot\cdot\cdot \sigma_{\delta}^{(-1)^{i-1}  \epsilon_{i}})$ for $1\leq i\leq 2n$, where $\delta=1$  if $i$ is odd and $\delta=2$ if $i$ is even.\\

 Let $k_i=\left\{\begin{array}{cl}
 1 & $if $\hskip 10pt  r_{i-1}(2)=r_{i-1}(3) \\
2 &  $if $\hskip 10pt  r_{i-1}(2)\neq r_{i-1}(3). \\
\end{array}\right.$.\\
 
\begin{Thm}\label{T5}
Suppose that the projection of a knot $K$ onto the $xy$-plane has a  plat presentation $p_4(w)$ with $w=\sigma_1^{\epsilon_1}\sigma_2^{-\epsilon_2}\cdot\cdot\cdot\sigma_1^{-\epsilon_{2n-1}}$
for some positive (negative) integers $\epsilon_i$ ($1\leq i\leq 2n-1$).\\

Then a given orientation for $K$, we have $w=\sigma_{1~k_{1}}^{\alpha_1}\sigma_{2~k_{2}}^{-\alpha_2}\cdot\cdot\cdot \sigma_{1~k_{2n-1}}^{\alpha_{2n-1}}$ for $k_i$ which is defined above.
\end{Thm}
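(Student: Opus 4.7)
The plan is to prove Theorem~\ref{T5} by induction on the block index $i$, carrying along the invariant that $r_{i-1}$ faithfully records the orientation labels of the three strands at the horizontal level immediately above the $i$-th block of crossings, with the convention that a label $1$ marks a strand directed upward (outgoing from the braid into $B_1$, i.e.\ an initial endpoint of one of the arcs $\delta_1,\delta_2$) and a label equal to the original top position marks a strand directed downward (incoming).

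For the base case, $r_0=r$ correctly encodes the orientations at the top of the braid. The convention that $\delta_1$ is oriented clockwise from $1$ to $0$ forces strand $1$ to be outgoing, giving $r(1)=1$. By Lemma~\ref{T4}, the orientation of $\delta_2$ is determined by $p_w^{-1}(3)$: it is clockwise (and hence strand $3$ is outgoing) when $p_w^{-1}(3)=3$, and counterclockwise (strand $2$ outgoing) when $p_w^{-1}(3)=2$. In either case the definition of $r$ assigns label $1$ to the outgoing strand in $\{2,3\}$ and its original index to the incoming one.

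For the inductive step there are two things to check. First, the propagation of labels: each individual crossing $\sigma_j^{\pm 1}$ inside the $i$-th block merely swaps the two strands at positions $j,j+1$, and since orientation travels with each strand the label vector at the new level is obtained by right-multiplication by the corresponding permutation matrix. Iterating across all $\epsilon_i$ crossings in the block and using Lemma~\ref{T3} yields exactly $r_i=r_{i-1}\,\mathcal{P}(\sigma^{\pm\epsilon_i})$, as required. Second, to identify $k_i$, observe that since the closed curve is a knot there is a unique down-strand at each horizontal level, so $r_{i-1}$ has exactly two entries equal to $1$ and one non-$1$ entry; the orientation pattern at the $i$-th block is therefore completely determined by the position of that non-$1$ entry. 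For each of the (at most three) positions one reads off the strand orientations at the crossing and checks that the matching case $\sigma_{j,k_i}$ of Figure~\ref{p5} is precisely the one selected by the rule $k_i=1\iff r_{i-1}(2)=r_{i-1}(3)$.

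The main obstacle I expect is this last case check. The rule for $k_i$ is stated uniformly in terms of entries $2$ and $3$ of $r_{i-1}$, regardless of whether the $i$-th block is a $\sigma_1$-block acting on positions $(1,2)$ (odd $i$) or a $\sigma_2^{-1}$-block acting on positions $(2,3)$ (even $i$). Verifying that this single position-independent condition genuinely distinguishes $\sigma_{j,1}$ from $\sigma_{j,2}$ in both families of blocks requires exhausting the three configurations of the non-$1$ entry and, in each, using the identification of configurations related by global reversal of all orientation arrows (flagged in the discussion of Figure~\ref{p5}) to match them to the cases in that figure. Once the case analysis is pushed through, the base case together with the propagation formula and the identification of $k_i$ completes the induction.
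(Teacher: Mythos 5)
Your plan follows the same route as the paper's own argument: read $r_{i-1}$ as orientation labels (an entry $1$ marks an upward strand, the unique non-$1$ entry marks the downward strand), propagate the labels through the braid by the permutation matrices of Lemma~\ref{T3}, and match the configuration sitting above the $i$-th block against the cases of Figure~\ref{p5}. The gap is precisely the step you defer: you assert, but never carry out, the check that the parity-independent test ``$k_i=1\iff r_{i-1}(2)=r_{i-1}(3)$'' selects the correct case for both families of blocks. That check is the entire content of the theorem, and it in fact fails for the odd ($\sigma_1$) blocks. A $\sigma_1$-block twists the strands at positions $1$ and $2$, so parallel versus antiparallel is decided by comparing $r_{i-1}(1)$ with $r_{i-1}(2)$; since exactly one entry of $r_{i-1}$ is non-$1$, this disagrees with the stated test exactly when the downward strand sits at position $1$ or at position $3$. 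This already happens in the paper's Example (a): for $3_1$ with $w=\sigma_1\sigma_2^{-1}\sigma_1$ the correct types (the ones the example itself uses) are $w=\sigma_{12}\sigma_{21}^{-1}\sigma_{12}$, i.e.\ $k_1=2$, $k_2=1$, $k_3=2$, while the orientation labels are $r_0=[1,2,1]$ and $r_1=r_2=[2,1,1]$; the stated rule then gives $r_2(2)=r_2(3)$, hence $k_3=1$, the wrong case, because the strands entering the third block are the downward strand at position $1$ and an upward strand at position $2$. The same failure occurs at $k_1$ in Example (c) for $5_2$, where $r_0=[1,1,3]$ forces $k_1=2$ by the rule although that block is parallel, $k_1=1$. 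So the ``main obstacle'' you flagged is a genuine obstruction, not a routine verification: the rule must be made parity-dependent (compare entries $1,2$ above a $\sigma_1$-block, entries $2,3$ above a $\sigma_2$-block) before your induction can close.

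You should also know that the paper's proof does not fill this hole either: in both of its cases it asserts that $r_{i-1}(2)=r_{i-1}(3)=1$ forces ``the two strings for the crossing'' to point the same way, which is exactly the non sequitur above, since for an odd block those two entries are not the two strings being crossed. A further defect touching your base case: for the trefoil word above one computes $p_w^{-1}(3)=1$, the case Section 3 claims cannot occur for a knot, so the initial labeling rule built from $p_w^{-1}(3)$ does not even produce the correct vector $[1,2,1]$ there; your appeal to Lemma~\ref{T4} for the base case inherits this problem. In short, your instinct about where the difficulty sits was exactly right, but a proposal that postpones that point proves nothing --- and here the postponed claim, with $k_i$ as literally defined, is false.
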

 
\begin{proof}
 Without loss of generality, we give the orientation (clockwise) to  $\delta_1$ from $1$ to $2$ along $\delta_1$. Then the direction of the orientation at 1 is up and the direction at 2 is down.
 Then we know that the orientation at $i$ is up if $r(i)=1$ and is down if $r(i)=2$.\\

Fix a value $i$. \\
 
Case 1: Suppose that $r_{i-1}(2)=r_{i-1}(3)=1$.\\
 
 Then the two strings for the ($\sum_{j=1}^{i-1}|\epsilon_j|+1)$-th crossing have the same direction  of the orientation since  $r_{i-1}(2)=r_{i-1}(3)=1$ So, the directions of the orientation are upward.\\
 
 Then  $k_i=1$. This is consistant with the $k_i$ that is defined above.\\

  Case 2: Suppose that $r_{i-1}(2)\neq r_{i-1}(3)$.\\
 
 Then the two strings for the ($\sum_{j=1}^{i-1}|\epsilon_j|+1)$-th crossing have different directions  for the orientation since  $r_{i-1}(2)\neq r_{i-1}(3)$.\\
 
 Then  $k_i=2$. This is consistant with the $k_i$ that is defined above.\\

\end{proof}

\section{The calculation of some examples}

First, we will calculate the HOMFLY polynomials of $3_1$ (trefoil knot), $5_1$ and $5_2$.\\

\begin{figure}[htb]
\includegraphics[scale=.4]{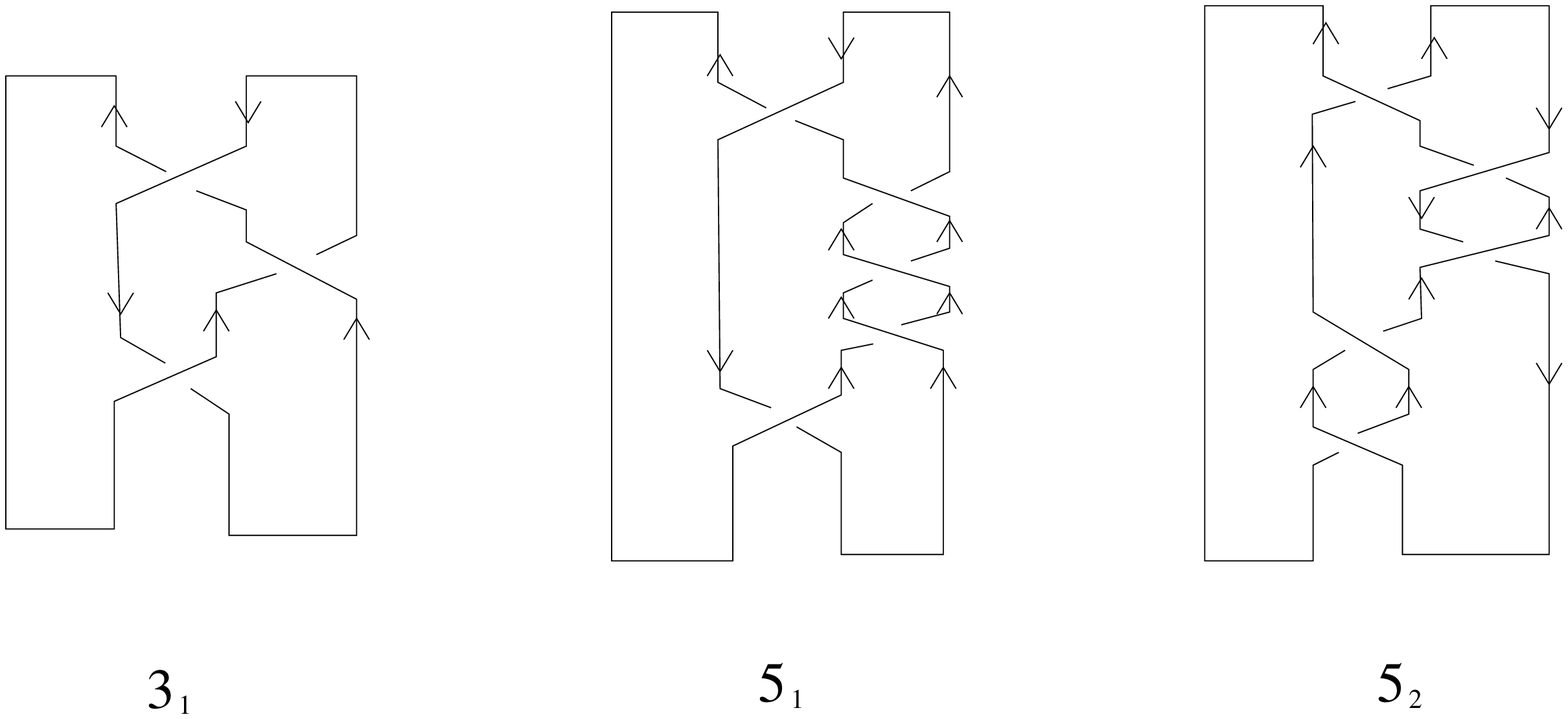}
\caption{}
\label{p9}
\end{figure}

\begin{enumerate}
\item[(a)] $3_1$ is represented by $w=\sigma_{12}\sigma_{21}^{-1}\sigma_{12}$. Then we have $A=A_2^1B_1^{-1}A_2^1$.\\

So, $[f(a,m)~g(a,m)~h(a,m)]=[-a^2+a^2m^2~a^3m~0]$.\\

Therefore, $P(3_1)=-a^2+a^2m^2-a^3m{a+a^{-1}\over m}=-a^2+a^2m^2-a^4-a^2=-2a^2+a^2m^2-a^4$.\\

\item[(b)] $5_1$ is represented by $w=\sigma_{12}\sigma_{21}^{-3}\sigma_{12}$. Then we have $A=A_2^1(B_1^{-1})^3A_2^1$.\\

So, $[f(a,m)~g(a,m)~h(a,m)]=[a^4-3a^4m^2+a^4m^4~a^5m(-2+m^2)~0]$.\\

Therefore, $P(5_1)=-a^6m^2+2a^6+a^4m^4-4a^4m^2+3a^4$.\\

\item[(c)] $5_2$ is represented by $w=\sigma_{11}^{-1}\sigma_{22}^{2}\sigma_{11}^{-2}$. Then we have $A=A_1^{-1}(B_2^{1})^2A_1^{-2}$.\\

So, $[f(a,m)~g(a,m)~h(a,m)]=[a^3m-a^3m^3-a^5m+a^5m^3~ a^4-a^4m^2+a^6m^2~0]$.\\

Therefore, $P(5_2)=a^6-a^2+a^2m^2+a^4-a^4m^2$.\\

Now, I will give a set of knots $K_1$  and $K_2$ so that they have the same Jones polynomial but different HOMFLY polynomials.\\

Consider the two knots $K_1=8_9$ and $K_2=4_1\# 4_1$.\\

First, we can check that the Kauffman (Jones) polynomial of $K_1$ and $K_2$ are the same as follows.\\

 $X_{8_9}=X_{4_1\#4_1}=(a^{16}-a^{12}+a^8-a^4+1)^2/a^{16}$.

\begin{figure}[htb]
\includegraphics[scale=.4]{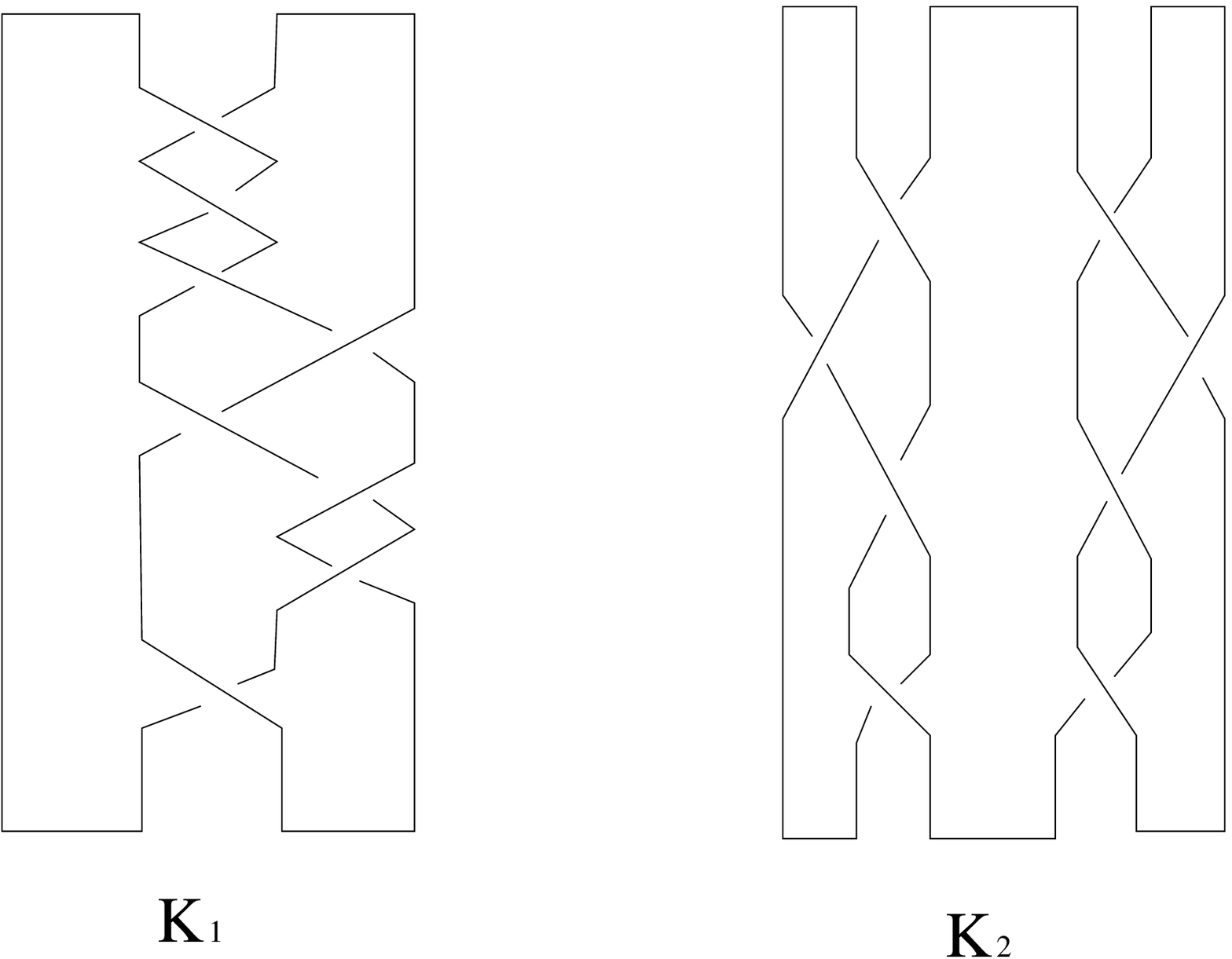}
\caption{}
\label{p10}
\end{figure}
\end{enumerate}

We note that $8_9$ is represented by $w=(\sigma_{11}^{-1})^3(\sigma_{22})(\sigma_{12}^{-1})(\sigma_{21})^2(\sigma_{12}^{-1})$.\\

So, $[f(a,m)~g(a,m)~h(a,m)]=[0~(-m^7a+4am^5-5am^3+a^{-1}m^5-2a^{-1}m^3+a^{-1}m+a^4a^{-1}m^5-3a^3m^3+2a^3m+am)~(-m^6a^2+3a^2m^4+m^4-3a^2m^2-m^2+a^4m^4-2a^4m^2+a^4)].$\\

Therefore, $P(8_9)=-2a^2m^4+5a^2m^2-4m^4+6m^2-2+a^4m^2-a^4-3a^2+m^6-a^{-2}m^4+2a^{-2}m^2-a^{-2}$.\\

However, $4_1$ is prepresented by $w=(\sigma_{11}^{-1})(\sigma_{22})(\sigma_{12}^{-1})^2$.\\

So, $[f(a,m)~g(a,m)~h(a,m)]=[0~(-am^3+a^{-1}m+a^3m)~(-a^2m^2+a^4)]$.\\

Therefore, $P(4_1)=m^2-a^2-1-a^{-2}$.\\

This implies that $P(4_1\#4_1)=P(4_1)\cdot P(4_1)=(m^2-a^2-1-a^{-2})^2$.\\

We can check that $P(8_9)\neq P(4_1\#4_1)$.\\

\begin{figure}[htb]
\includegraphics[scale=.4]{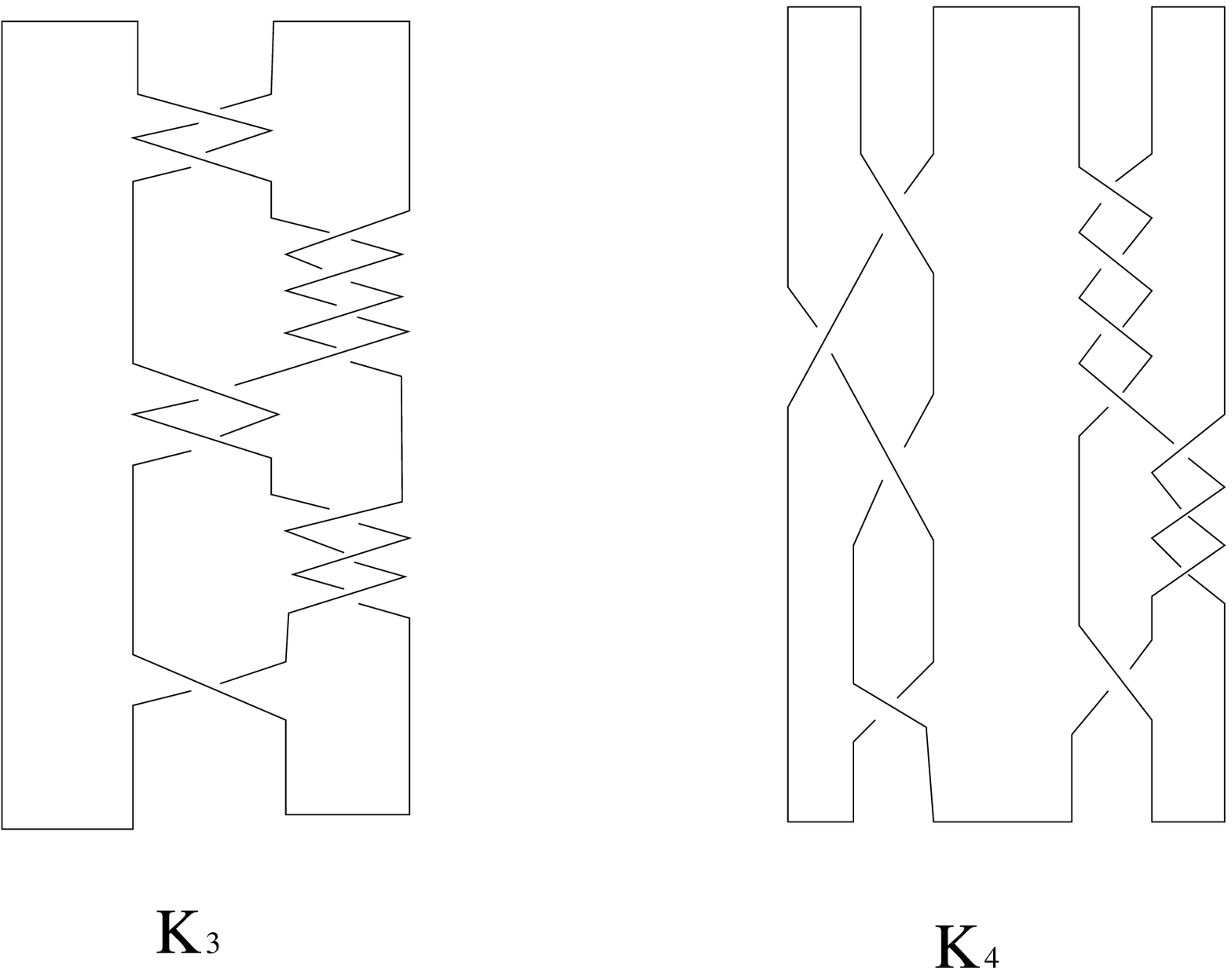}
\caption{}
\label{p11}
\end{figure}

Now, consider the two knots $K_3$ and $K_4=4_1\#8_3$ as in Figure~\ref{p11}.\\

We can check that they have the same Kauffman (Jones) polynomial as follows.\\

$X_{K_3}=X_{K_4}=(a^{16}-a^{12}+a^8-a^4+1)(a^{32}-a^{28}+2a^{24}-3a^{20}+3a^{16}-3a^{12}+2a^8-a^4+1)/a^{24}$.\\

Then $K_3$ is represented by $w=(\sigma_{12}^{-1})^2(\sigma_{22}^1)^4(\sigma_{12}^{-1})^2(\sigma_{22}^1)^3(\sigma_{11}^{-1})$.\\

So, $[f(a,m)~g(a,m)~h(a,m)]=[(a^{10}-2a^8m^2+2a^6m^2+m^4a^6-2a^4m^4+a^2m^4+a^2m^2-m^2)/a^2,-m(a^2-1)(a^6-a^4m^2+a^2m^2-1)/a^3,0]$.\\

Therefore, $P(K_3)=(a^{12}-2a^{10}m^2+a^8m^2+a^8m^4-2m^4a^6+a^4m^4+2a^4m^2-2a^2m^2+a^{10}-a^6+a^6m^2-a^4+1)/a^4$.\\

$8_3$ is represented by $w=(\sigma_{12}^{-1})^4(\sigma_{22}^1)^3(\sigma_{11}^{-1})$.\\

So,  $[f(a,m)~g(a,m)~h(a,m)]=[(a^6-a^4m^2+2a^2m^2-m^2)/a^2,m(a^2-1)/a^3,0]$.\\

Therefore, $P(8_3)=(a^8-a^6m^2+2a^4m^2-a^2m^2-a^4+1)/a^4$.\\

Now, we know that $P(4_1\#8_3)=(m^2-a^2-1-a^{-2})(a^8-a^6m^2+2a^4m^2-a^2m^2-a^4+1)/a^4$.\\

This implies that $P(K_3)\neq P(K_4)=P(4_1\#8_3)$.

\end{document}